\newtheorem{theorem}{Theorem}[section]
\newtheorem{lemma}[theorem]{Lemma}
\newtheorem{proposition}[theorem]{Proposition}
\newtheorem{corollary}[theorem]{Corollary}
\theoremstyle{definition}
\numberwithin{equation}{section}
\begin{document}

% \title[short text for running head]{full title}
\title[AMZ of non-Archimedean dynamics]{Artin-Mazur zeta functions of certain non-Archimedean dynamical systems}

%    Only \author and \address are required; other information is
%    optional.  Remove any unused author tags.

%    author one information
% \author[short version for running head]{name for top of paper}
\author{Junghun Lee}
\address{Graduate School of Mathematics, Nagoya University, 
Nagoya 464-8602, Japan}
%\curraddr{}
\email{m12003v@math.nagoya-u.ac.jp}
\thanks{The author wishes to express his thanks to Professor Tomoki Kawahira and Professor Kohji Matsumoto. He is also grateful to Professor Yu Yasufuku and Professor Seidai Yasuda for their valuable comments.}
%\KeyWords{Non-Archimedean dynamics, Artin-Mazur zeta function}

%    author two information
%\author{}
%\address{}
%\curraddr{}
%\email{}
%\thanks{}

\subjclass[2010]{Primary 37P40, Secondly 11S82}
%    The 2010 edition of the Mathematics Subject Classification is
%    now available.  If you are citing a classification from the
%    new scheme, use the following input coding instead.
%\subjclass[2010]{Primary }

\date{October 18, 2015}

\begin{abstract}
In this paper, we will prove the rationality of the Artin-Mazur zeta functions of some non-Archimedean dynamical systems.
\end{abstract}

\maketitle

%    Text of article.

\section{Introduction}

A. Weil considered {\it local zeta functions}, which are derived from counting the number of rational points on algebraic varieties over finite fields, and conjectured rationality of the local zeta functions in \cite{We49}.
He also conjectured that the local zeta functions should satisfy a form of functional equations and an analogue of the {\it Riemann hypothesis}, which states that all zeros of the local zeta functions are placed in a restricted region such as a line or a circle.
His conjectures were solved by B. Dwork, A. Grothendieck, and P. Deligne. See 
\cite{Dw60}, \cite{Groth65}, and \cite{Del74} for more details.

In \cite{AM65}, as a generalization of the local zeta functions, M. Artin and B. Mazur introduced the Artin-Mazur zeta functions of {\it the theory of (discrete) dynamical systems}, which investigates the iterations of a given continuous map from a topological space to itself.
More precisely, for given topological space $X$ and continuous map $f : X \rightarrow X$, the {\it Artin-Mazur zeta function $\zeta_f(T)$ of $f$ on $X$} is defined by
$$
\zeta_f(T) : = \exp \left( \sum_{k=1}^{\infty}{\mathcal{N}_{k} \over k}T^{k} \right)
$$
where $\mathcal{N}_k$ is the number of the isolated fixed points of the {\it $k$-th iteration} $f^k$ of $f$.
%Here, we say that $\alpha \in X$ is a {\it fixed point} of $f$ if $f(\alpha) = \alpha$.
%A role of the Artin-Mazur zeta function is an invariance under a {\it topologically conjugacy}, which is a homeomorphism $h: X \rightarrow Y$ satisfying $h \circ f = g \circ h$ on $X$ where $X$ and $Y$ are topological spaces and $f : X \rightarrow X$ and $g : Y \rightarrow Y$ are continuous maps.
%On the other hand, J. Milnor and W. Thurston proved that the Artin-Mazur zeta functions are the inverse of kneading determinant in \cite{MT88}.
%In a number of dynamical systems, the rationality or the transcendence of the Artin-Mazur zeta functions already shown. See \cite{Mann71} and \cite{Brid12} for more details.
%For instance, A. Manning proved the rationality of the Artin-Mazur zeta functions of diffeomorphisms satisfying Axiom A on compact and differentiable manifolds in \cite{Mann71}.
%A. Bridy proved the rationality of the Artin-Mazur zeta functions of some polynomial maps over the algebraical closer of finite fields in \cite{Brid12}.

A motivation of this paper is the result of A. Hinkkanen in \cite{Hinkk94}, which shows the rationality and an analogue of the Riemann hypothesis of the Artin-Mazur zeta functions of complex dynamical systems.
%One of the difficulties of the calculation of the Artin-Mazur zeta functions of rational maps on the Riemann sphere is to count the {\it multiplicity of the rational map at a fixed point}, which is defined as the number of zeros of $f(z) - z$ at the point where $f$ is a rational map over $\mathbb{C}$, because it is clear that the number of fixed points of a rational map of degree $d$ is equal to $d + 1$, counted with multiplicity.
%To overcome this problem, A. Hinkkanen used the notion of a {\it parabolic periodic point}, which is defined as a periodic point whose derivative at the fixed point is a root of unity, in the theory of complex dynamical systems.
As Hinkkanen considered the Artin-Mazur zeta functions of rational maps on the Riemann sphere, it is natural to consider the Artin-Mazur zeta functions of rational maps on the projective line over an algebraically closed, complete, and non-Archimedean field.
%The main result of this paper, which is an analogue of Hinkkanen's result, states that the same result holds in the theory of non-Archimedean dynamical systems.
More precisely, let $K$ be an algebraically closed field with a complete, multiplicative, non-Archimedean, and non-trivial norm $|\cdot|$.
%For instance, one can consider $K$ as the field of $p$-adic complex numbers.
The {\it projective line} $\mathbb{P}^1_K$ over $K$ is defined as the quotient space $(K^{2} \backslash \{ {\bf 0} \}) / \sim$ where $\sim$ is the equivalence relation defined by: $(z_0, z_1) \sim (w_0, w_1)$ if there exists a non-zero element $c$ in $K$ such that $(z_0, z_1) = (c \cdot w_0, c \cdot w_1)$. The projective line $\mathbb{P}^1_K$ is equipped with the quotient topology.
Note that we can identify $(z: 1)$ in $\mathbb{P}^1_K$ with $z$ in $K$ and $(1: 0)$ in $\mathbb{P}^1_K$ with $\infty$.
A {\it rational map} $f : \mathbb{P}^1_K \rightarrow \mathbb{P}^1_K$ over $K$ is a map given by $f(z) = {f_0(z) / f_1(z)}$ where $f_0$ and $f_1$ are polynomials over $K$ with no common factors. 
The {\it degree of $f$} is defined by $\max\{ \deg(f_0), \deg(f_1) \}$.
Now let us state the main result of this paper.

\begin{theorem}\label{main}
Let $f : \mathbb{P}^1_K \rightarrow \mathbb{P}^1_K$ be a rational map of degree $d \geq 2$ over $K$.
Suppose that the characteristic of $K$ is zero.
Then the Artin-Mazur zeta function $\zeta_f(T)$of $f$ on $\mathbb{P}^1_K$ is rational over $\mathbb{Q}$.
Moreover, all of the zeros of $\zeta_f(T)$ are on the unit circle.
\end{theorem}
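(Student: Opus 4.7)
The plan is to adapt Hinkkanen's complex-dynamical strategy to the non-Archimedean setting, reducing the theorem to the finiteness of parabolic periodic cycles. Because $\deg(f^k) = d^k \geq 2$, the iterate $f^k$ is never the identity, so the equation $f^k(z) = z$ has exactly $d^k + 1$ solutions in $\mathbb{P}^1_K$ counted with multiplicity; writing $\mathcal{M}_k$ for this count, a direct computation gives
\[
\exp\left(\sum_{k=1}^{\infty} \frac{\mathcal{M}_k}{k} T^k\right) = \frac{1}{(1-T)(1-dT)},
\]
which is already rational. The task is then to show that the discrepancy $\mathcal{M}_k - \mathcal{N}_k$, which records the excess multiplicity of non-simple fixed points of $f^k$, is supported on finitely many periodic cycles.

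Next I would classify the sources of non-simplicity. A periodic point $z_0$ of exact period $n$ with multiplier $\mu = (f^n)'(z_0)$ enters $f^k$ (for $n \mid k$) with multiplier $\mu^{k/n}$, and is non-simple exactly when $\mu$ is a root of unity of some order $m$ with $m \mid k/n$. Since $\operatorname{char} K = 0$, the formal-power-series normal form of a parabolic fixed point transfers verbatim from the complex case: for $\mu$ of order $m$ one has $f^{nm}(z) = z + c(z - z_0)^{qm+1} + O((z - z_0)^{qm+2})$ with $c \ne 0$ and an integer $q \ge 1$, and because the leading nonlinear coefficient merely scales by $a$ under $f^{nm} \mapsto f^{nma}$, the multiplicity of $z_0$ as a root of $f^k(z) = z$ is $qm + 1$ whenever $nm \mid k$ and is $1$ when $n \mid k$ but $nm \nmid k$.

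The decisive input is a non-Archimedean analog of Fatou's finiteness, due to Benedetto and Rivera-Letelier in characteristic zero: a rational map of degree $\geq 2$ on $\mathbb{P}^1_K$ admits only finitely many non-repelling periodic cycles. Every parabolic cycle is non-repelling, so only finitely many cycles $C$ contribute, with invariants $(n_C, m_C, q_C)$ as above. Summing the local contributions gives
\[
\mathcal{N}_k = (d^k + 1) - \sum_{\substack{C \\ n_C m_C \,\mid\, k}} n_C \, q_C \, m_C,
\]
and substituting into the definition of $\zeta_f$, using the identity $\sum_{k\,:\,\ell \mid k} T^k/k = -\ell^{-1} \log(1 - T^\ell)$, yields the closed form
\[
\zeta_f(T) = \frac{\prod_{C} (1 - T^{n_C m_C})^{q_C}}{(1-T)(1-dT)}.
\]
This is manifestly rational over $\mathbb{Q}$ since each $q_C$ is a positive integer, and every zero of the numerator is a root of unity, hence lies on the unit circle, giving the Riemann-hypothesis analog.

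The step requiring the most care is the finiteness of non-repelling cycles on $\mathbb{P}^1_K$ in the full generality of a complete, algebraically closed, non-trivially valued, characteristic-zero non-Archimedean field: the arguments available for $\mathbb{C}_p$ are expected to transfer, but one must verify that no issue of residue characteristic obstructs them. By contrast, the parabolic normal form is pure formal-series algebra in characteristic zero and imports directly; the only checkpoint is that the coefficient $c$ above is nonzero, which amounts to $f^{nm}$ not being the identity and follows from $\deg f \ge 2$.
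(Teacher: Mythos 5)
Your overall architecture coincides with the paper's: count $d^k+1$ fixed points of $f^k$ with multiplicity, observe that excess multiplicity occurs only at periodic points whose multiplier is a root of unity, control the local normal form $z+c(z-z_0)^{qm+1}+\cdots$ and its behaviour under iteration, and assemble the closed form $\zeta_f(T)=(1-T)^{-1}(1-dT)^{-1}\prod_C(1-T^{n_Cm_C})^{q_C}$, which matches the paper's formula. The local analysis (divisibility of $\mu-1$ by the order of the multiplier, invariance of $\mu$ under passing from $f^{nm}$ to $f^{nma}$) is exactly the content of the paper's Lemmas 3.2 and 3.3 and is fine.

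The genuine gap is your ``decisive input.'' The statement that a rational map of degree $\geq 2$ on $\mathbb{P}^1_K$ has only finitely many non-repelling cycles is \emph{false} in the non-Archimedean setting: for instance $f(z)=z^2$ over $\mathbb{C}_p$ with $p$ odd has, for every $n$, the periodic points $\zeta^{2^n-1}=1$ with multiplier $2^n$ of absolute value $1$, so it has infinitely many indifferent (hence non-repelling) cycles; the same happens for any map of good reduction. No theorem of Benedetto or Rivera-Letelier asserts otherwise. What your argument actually needs is the weaker statement that only finitely many cycles have a \emph{root-of-unity} multiplier, and this is true but still requires proof in the stated generality; your proposal supplies neither a valid citation nor an argument, precisely at the step you yourself flag as the delicate one. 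The paper closes this gap with a Lefschetz-principle argument (Lemma 3.1): since $\operatorname{char}K=0$, the countable field generated over $\mathbb{Q}$ by the coefficients of $f$ and the relevant periodic points embeds into $\mathbb{C}$, the set in question maps into the set of parabolic periodic points of the induced complex rational map, and the classical complex finiteness theorem (Milnor, Corollary 10.16) applies. Replacing your false finiteness claim by this specialization argument (or by any correct proof that parabolic cycles are finite in number over $K$) would repair the proof; as written, the proposal rests on an incorrect theorem.
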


In the rest of this paper, we assume that the characteristic of $K$ is zero.
We remark that if the degree of $f : \mathbb{P}^1_K \rightarrow \mathbb{P}^1_K$ is less than two, one can easily check that the Artin-Mazur zeta function $\zeta_f(T)$ is equal to either $1$, $(1 - T)^{-1}$, $(1 - T)^{-2}$, or $(1 - T)^{-2} \cdot (1 - T^q)^{2/q}$ for some $q \in \mathbb{N} \backslash \{ 1 \}$.

The idea of the proof of Theorem \ref{main} is the same with Hinkkanen's: counting the multiplicities of each fixed points and subtracting them from all fixed points.
In Section $2$, we will briefly review some of the standard facts on fixed points of rational maps.
We will prepare some key lemmas in Section $3$ and prove Theorem \ref{main} in Section $4$.

%%%%%%%%%%%%%%%%%%%%%%%%%%%%%%%%%%%%%%%
\section{Preliminaries}
%%%%%%%%%%%%%%%%%%%%%%%%%%%%%%%%%%%%%%%

In this section, we will see some basics of fixed points of rational maps over $K$.
%Let us begin with some review of notions.
%We say that an element $\alpha \in \mathbb{P}^1_K$ is a {\it periodic point} of $f$ if $f^n(\alpha) = \alpha$.
We say that $f$ has $m$ fixed points at $\alpha \in K$ (resp. $\alpha = \infty$) if $f(z) - z$ (resp. $1/f(1/z) - z$) has $m$ zeros at $\alpha$ (resp. $0$). 
This constant $m$ is called the {\it multiplicity} of a fixed point $\alpha$ of $f$.
The {\it multiplier} of $f$ at a fixed point $\alpha$ is defined by 
\begin{align*}
\lambda(f; \alpha) =
\begin{cases}
(f)'(\alpha) \quad &(\alpha \in K) \\\
\displaystyle \lim_{z \rightarrow 0} { (f)'(1/z) \over z^2 \cdot f(1/z)^2} \quad &(\alpha = \infty).
\end{cases}
\end{align*}
%One can easily check that $\lambda(f^k; \alpha) = \lambda(f; \alpha)^k$ for any $k \in \mathbb{N}$.

The following proposition, which implies that the multiplier and multiplicity are invariant under change of coordinates, can be obtained from simple calculations.

\begin{proposition}\label{2.1}
Let $f : \mathbb{P}^1_K \rightarrow \mathbb{P}^1_K$ be a rational map of degree $\geq 2$, $\phi : \mathbb{P}^1_K \rightarrow \mathbb{P}^1_K$ a rational map of degree one, and $\alpha \in \mathbb{P}^1_K$ a fixed point. 
Then the following statements hold.
\begin{enumerate}
\item The element $\phi^{-1}(\alpha)$ is a fixed point of $\phi^{-1} \circ f \circ \phi$. \\
\item The multiplier $\lambda(f; \alpha)$ is equal to $\lambda(\phi^{-1} \circ f \circ \phi; \phi^{-1}(\alpha))$. \\
\item The multiplicity of $\alpha$ of $f(z)$ is equal to the multiplicity of $\phi^{-1}(\alpha)$ of $\phi^{-1} \circ f \circ \phi(z)$.
\end{enumerate}
\end{proposition}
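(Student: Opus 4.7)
The plan is to first dispatch part (1) by direct substitution: since $\alpha$ is a fixed point of $f$, we have
\[
(\phi^{-1} \circ f \circ \phi)(\phi^{-1}(\alpha)) = \phi^{-1}(f(\alpha)) = \phi^{-1}(\alpha),
\]
which is valid on $\mathbb{P}^1_K$ regardless of whether $\alpha$ or $\phi^{-1}(\alpha)$ happens to equal $\infty$. For parts (2) and (3), I would reduce to a short list of elementary coordinate changes. Every degree-one rational map $\phi$ on $\mathbb{P}^1_K$ is a M\"obius transformation, hence can be written as a composition of maps of three types: translations $z \mapsto z+b$, scalings $z \mapsto cz$ (with nonzero $c$), and the inversion $z \mapsto 1/z$. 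Because conjugation is compatible with composition, i.e.
\[
(\phi_1 \circ \phi_2)^{-1} \circ f \circ (\phi_1 \circ \phi_2) = \phi_2^{-1} \circ (\phi_1^{-1} \circ f \circ \phi_1) \circ \phi_2,
\]
it suffices to verify (2) and (3) separately for each of these three elementary types.

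For the affine types, $\phi$ fixes $\infty$, so the dichotomy is simply $\alpha \in K$ versus $\alpha = \infty$. When $\alpha \in K$, the chain rule gives $(\phi^{-1} \circ f \circ \phi)'(\phi^{-1}(\alpha)) = f'(\alpha)$ immediately, and the identity $(\phi^{-1} \circ f \circ \phi)(z) - z = \phi^{-1}(f(\phi(z))) - \phi^{-1}(\phi(z))$ shows that this function differs from $f(w) - w$ (with $w = \phi(z)$) only by a nonzero constant factor, so the order of vanishing at $z = \phi^{-1}(\alpha)$ agrees with that of $f(w)-w$ at $w = \alpha$. When $\alpha = \infty$, one performs the analogous computation with the companion function $1/f(1/z)$ used in the definition, and again obtains the required equality.

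The bulk of the content lies in the inversion case $\phi(z) = 1/z$, which swaps $0$ and $\infty$. For $\alpha \in K \setminus \{0\}$, setting $g(z) := \phi^{-1} \circ f \circ \phi(z) = 1/f(1/z)$, a direct chain-rule computation yields $g'(1/\alpha) = f'(\alpha)$, and a parallel substitution shows that $g(z) - z$ vanishes at $1/\alpha$ to the same order as $f(w) - w$ at $\alpha$. The case $\alpha = \infty$, in which $\phi^{-1}(\alpha) = 0$, is essentially built into the definitions: $\lambda(f;\infty)$ is defined so as to coincide with $g'(0)$, and the multiplicity of $f$ at $\infty$ is by construction the order of vanishing of $g(z)-z$ at $0$. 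The case $\alpha = 0$ is symmetric to $\alpha = \infty$ under swapping the roles of $f$ and $g$.

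The main obstacle, such as it is, lies not in any single calculation but in the bookkeeping: keeping track of which cases put the fixed point at $\infty$ (so the companion function must be invoked) and which keep it finite, across the three elementary conjugations. Once the inversion case is handled, any remaining affine-to-infinity situation reduces to an affine-to-finite one by pre- and post-composing with an additional inversion, completing the verification.
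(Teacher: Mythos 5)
Your argument is correct. The paper gives no proof of this proposition at all --- it is simply asserted to follow from ``simple calculations'' --- and your reduction to the elementary generators (translations, scalings, inversion) via the compatibility of conjugation with composition, together with the chain rule for the multiplier and the divisibility of $\phi^{-1}(f(w)) - \phi^{-1}(w)$ by $f(w)-w$ up to a unit for the multiplicity, is a standard and complete way of carrying out exactly those calculations.
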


As an immediate corollary, we have the following statement.

\begin{corollary}\label{2.2}
Let $f : \mathbb{P}^1_K \rightarrow \mathbb{P}^1_K$ be a rational map of degree $d \geq 2$. Then the number of fixed points of $f$ in $\mathbb{P}^1_K$ is exactly $d +1$, counted with multiplicity.
\end{corollary}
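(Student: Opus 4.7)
The plan is to reduce to the case where $\infty$ is not a fixed point of $f$, in which case the count follows by counting roots of a single polynomial over the algebraically closed field $K$.

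First I would handle the case when $f(\infty) \neq \infty$. Writing $f(z) = f_0(z)/f_1(z)$ in lowest terms, this condition is equivalent to $\deg f_0 \leq \deg f_1$, so $\deg f_1 = d$. Set $P(z) = f_0(z) - z f_1(z)$. Because $f_0$ and $f_1$ share no common root, any zero of $f_1$ fails to be a zero of $P$, so the roots of $P$ in $K$ are exactly the finite fixed points of $f$ counted with multiplicity. Now $-zf_1(z)$ has degree $d+1$ while $f_0(z)$ has degree at most $d$, so $\deg P = d+1$ with no cancellation in the leading term. Algebraic closedness of $K$ then yields precisely $d+1$ roots counted with multiplicity, and since $\infty$ is not fixed, this is the total count.

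For the general case I would conjugate $f$ by a M\"obius transformation to move any fixed point at $\infty$ to a finite location. Since $\deg f \geq 2$, the rational function $f(z) - z$ is not identically zero, so its finitely many zeros in $K$ together with $\infty$ do not exhaust the infinite set $\mathbb{P}^1_K$; pick $\beta \in \mathbb{P}^1_K$ with $f(\beta) \neq \beta$. Choose a degree-one rational map $\phi$ with $\phi(\infty) = \beta$, and put $g = \phi^{-1} \circ f \circ \phi$. Conjugation by a degree-one map preserves the degree, so $\deg g = d$, and Proposition~\ref{2.1}(1) shows that $\phi^{-1}(\beta) = \infty$ is not a fixed point of $g$. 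Applying the first case to $g$ gives $d+1$ fixed points counted with multiplicity, and Proposition~\ref{2.1}(3) transfers this count back to $f$.

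The only point that needs care is the degree computation $\deg P = d+1$, which requires observing that the leading coefficient of $-zf_1(z)$ is nonzero (its degree being strictly larger than that of $f_0$) and hence cannot be cancelled. The rest is a direct application of Proposition~\ref{2.1} together with the algebraic closedness of $K$, so I do not anticipate a serious obstacle.
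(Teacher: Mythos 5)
Your proposal is correct and follows essentially the same route as the paper: reduce by a degree-one conjugation to the case where $\infty$ is not fixed, identify the finite fixed points with the roots of $f_0(z) - z f_1(z)$, and count $d+1$ roots using algebraic closedness, transferring multiplicities back via Proposition~\ref{2.1}. You supply some details the paper leaves implicit (why $\deg f_1 = d$ when $\infty$ is not fixed, why the leading term of $-zf_1(z)$ survives, and why a non-fixed $\beta$ exists), but the argument is the same.
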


\begin{proof}[Proof of Corollary \ref{2.2}]
Since $f$ is not the identity map on $\mathbb{P}^1_K$, we may assume that $\infty$ is not a fixed point of $f$ by change of coordinates.
Let us write $f(z) = f_0(z) / f_1(z)$ where $f_0 , f_1$ are polynomial maps with no common factors.
%Note that $f(\infty) = \neq \infty$ implies that $d = \deg(f_1)$.
One can easily check that $\alpha \in K$ is a fixed point of $f$ if and only if
$$
f_0(\alpha) - \alpha \cdot f_1(\alpha) = 0.
$$
Since $K$ is algebraically closed, $f_0 (z) - z \cdot f_1(z)$ is a polynomial over $K$, and $d = \deg(f_1)$, there exist exactly $d + 1$ zeros of $f_1(z) - z \cdot f_2(z)$ in $K$, counted with multiplicity.
By Proposition \ref{2.1}, the multiplicities of fixed points are invariant under change of coordinates thus we complete our proof.
\end{proof}

%See \cite[Section $2.2$ and Proposition $1.9$]{Silv07} for the proof of Proposition \ref{2.1}.

%
\begin{proposition}\label{2.3}
Let $f : \mathbb{P}^1_K \rightarrow \mathbb{P}^1_K$ be a rational map of degree $\geq 2$.
Then the following statements hold.
\begin{enumerate}
\item If $\alpha, f(\alpha) \in K$, then there exist an $r > 0$ and a sequence $\{ a_k \}_{k = 2}^{\infty} \subset K$ such that $\lim_{k \rightarrow \infty} |a_k| \cdot r^k = 0$ and
$$
f(z) = f(\alpha) + \lambda(f; \alpha) \cdot (z- \alpha) + a_2 \cdot (z - \alpha)^2 + \cdots
$$
on $\{ z \in K \mid |z - \alpha|  \leq r \}$. 
Moreover, if $\alpha = f(\alpha)$, then the multiplicity of $\alpha$ of $f$ is equal to
$$
\begin{cases}
1 \quad &(\lambda(f; \alpha) \neq 1) \\
\mu(f; \alpha) \quad &(\lambda(f; \alpha) = 1)
\end{cases}
$$
where $\mu(f; \alpha)$ is the natural number $\min\{i \in \mathbb{N} \mid i \geq 2, a_i \neq 0 \}$.

\item If $\alpha = f(\alpha) = \infty$, then there exist an $r > 0$ and a sequence $\{ a_k \}_{k = 2}^{\infty} \subset K$ such that $\lim_{k \rightarrow \infty} |a_k| \cdot r^k = 0$ and 
$$
{ 1 \over f(1/z)} = \lambda(f; \alpha) \cdot z + a_2 \cdot z^2 + \cdots
$$
on $\{ z \in K \mid |z |  \leq r \}$. 
Moreover, the multiplicity of $\alpha$ of $f$ is equal to
$$
\begin{cases}
1 \quad &(\lambda(f; \alpha) \neq 1) \\
\mu(f; \alpha) \quad &(\lambda(f; \alpha) = 1)
\end{cases}
$$
where $\mu(f; \alpha)$ is the natural number $\min\{i \in \mathbb{N} \mid i \geq 2, a_i \neq 0 \}$.
\end{enumerate}
\end{proposition}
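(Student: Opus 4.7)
The plan is to obtain both statements by producing an explicit convergent power-series expansion of $f$ near $\alpha$, identifying its linear coefficient with $\lambda(f;\alpha)$, and then reading off the order of vanishing of $f(z)-z$ (respectively $1/f(1/z)-z$) from this expansion.

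For part (1), I would write $f=f_0/f_1$ with $f_0,f_1\in K[z]$ coprime. Since $f(\alpha)\in K$, we have $f_1(\alpha)\neq 0$, so we may factor $f_1(z)=f_1(\alpha)(1+h(z))$ with $h\in K[z]$ and $h(\alpha)=0$. Writing $h(z)=(z-\alpha)q(z)$ yields a bound $|h(z)|\le C|z-\alpha|$ on any bounded neighbourhood of $\alpha$, so I can choose $r>0$ small enough that $|h(z)|<1$ on the closed disk $\{|z-\alpha|\le r\}$. On that disk the non-Archimedean geometric series $\sum_{k\ge 0}(-h(z))^k$ converges and represents $1/(1+h(z))$; multiplying by the finite Taylor expansion of $f_0$ about $\alpha$ produces the desired series $f(z)=f(\alpha)+\sum_{k\ge 1}a_k(z-\alpha)^k$. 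The decay condition $|a_k|r^k\to 0$ is the standard characterization of convergence of a non-Archimedean power series on the closed disk of radius $r$, and follows from the fact that our expansion defines a bounded function there. The coefficient of $(z-\alpha)$ is $f'(\alpha)$ by the quotient rule applied to $f_0/f_1$, which by definition equals $\lambda(f;\alpha)$.

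Assuming in addition that $\alpha=f(\alpha)$, the expansion of $f(z)-z$ reads $(\lambda-1)(z-\alpha)+\sum_{k\ge 2}a_k(z-\alpha)^k$. Because $f_1(\alpha)\neq 0$, the order of vanishing of the polynomial $f_0(z)-zf_1(z)$ at $\alpha$---which is the multiplicity by definition---equals the index of the first nonzero coefficient of this power series, yielding $1$ when $\lambda\neq 1$ and $\mu(f;\alpha)$ when $\lambda=1$.

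For part (2), I would reduce to part (1) via the change of coordinates $\phi(z)=1/z$: setting $g=\phi^{-1}\circ f\circ\phi$, so that $g(z)=1/f(1/z)$, Proposition \ref{2.1} guarantees that $0$ is a fixed point of $g$ with $\lambda(g;0)=\lambda(f;\infty)$ and the same multiplicity as $\infty$ has under $f$. Applying part (1) to $g$ at $0$ then yields both the claimed expansion and the multiplicity identification. The main obstacle I anticipate is the convergence step: producing a single $r>0$ for which the geometric-series inversion of $f_1$ works on the closed disk $|z-\alpha|\le r$ with the required non-Archimedean decay of coefficients. Everything else---identifying the linear coefficient with $\lambda(f;\alpha)$ via the quotient rule, and reading the multiplicity as the index of the first nonzero coefficient of $f(z)-z$---is formal once the analytic representative is in place.
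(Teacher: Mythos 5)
Your argument is essentially correct, but it is worth noting that the paper does not prove Proposition \ref{2.3} at all: it simply cites \cite[Proposition 5.8(b),(c)]{Silv07} and \cite[Proposition A.16]{BR10}. So you are supplying a self-contained proof where the author outsources one, and your route (Taylor-expanding $f_0$ at $\alpha$, inverting $f_1(z)=f_1(\alpha)(1+h(z))$ by the non-Archimedean geometric series, reading the linear coefficient as $f'(\alpha)$, and identifying the order of vanishing of $f(z)-z$ with that of $f_0(z)-zf_1(z)$ since $f_1(\alpha)\neq 0$) is exactly the standard argument found in those references; the reduction of part (2) to part (1) via $\phi(z)=1/z$ together with Proposition \ref{2.1} is also the expected one. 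The only step I would tighten is your justification of the decay $|a_k|r^k\to 0$: boundedness of the resulting function on the disk does not by itself give coefficient decay. The clean argument is to shrink $r$ until the Gauss norm $\|h\|_r=\max_j |c_j| r^j$ is strictly less than $1$ (possible because every monomial of $h$ carries a positive power of $z-\alpha$); then multiplicativity of the Gauss norm gives $\|h^k\|_r=\|h\|_r^k\to 0$, so $\sum_k(-h)^k$ converges in the ring of power series with $|a_k|r^k\to 0$, and since the coefficient of $(z-\alpha)^m$ receives contributions only from $k\le m$, the rearrangement into a single power series is legitimate. With that adjustment your proof is complete and matches the cited sources in spirit.
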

See \cite[Proposition $5.8$(b), (c)]{Silv07} and \cite[Proposition A.16]{BR10} for the proof of Proposition \ref{2.3}.
%

%Note that For a given $\alpha \in \mathbb{P}^1_K$ satisfying $\alpha, f(\alpha) \in K$ or $\alpha = f(\alpha) = \infty$, the {\it holomorphic form} of $f$ at $\alpha$ is defined as the convergent power series on a small enough closed ball given in Proposition \ref{2.3}. Note that the convergent power series is unique.

%If $\alpha = f(\alpha)$, the holomorphic form of $f$ at $\alpha$ can be written as
%$$
%\alpha + \lambda(f; \alpha) \cdot (z - \alpha) + a_2 \cdot (z - \alpha)^2 + \cdots \quad (\alpha \in K)
%$$
%or
%$$
%0 + \lambda(f; \alpha) \cdot z + a_2 \cdot z^2 + \cdots \quad (\alpha = \infty).
%$$
%Let us denote by $\mu(f; \alpha)$ 
%Note that the constant $\mu(f; \alpha)$ exists if the degree of $f$ is greater than one.
%The following proposition is a corollary of Proposition \ref{2.3}.

%%
%\begin{corollary}\label{2.4}
%Let $f : \mathbb{P}^1_K \rightarrow \mathbb{P}^1_K$ be a rational map of degree $\geq 2$ and $\alpha$ a fixed point of $f$.
%Then the following statements hold.
%\begin{enumerate}
%\item
%If $\lambda(f; \alpha) \neq 1$, then the multiplicity of $\alpha$ of $f$ is equal to $1$.
%\item
%Otherwise, the multiplicity of $\alpha$ of $f$ is equal to $\mu(f; \alpha)$.
%\end{enumerate}
%\end{corollary}
%%

%%%%%%%%%%%%%%%%%%%%%%%%%%%%%%%%%%%%%%%
\section{Key lemmas}
%%%%%%%%%%%%%%%%%%%%%%%%%%%%%%%%%%%%%%%

In this section, we prepare some lemmas for the proof of Theorem \ref{main}.
The main reason why we assume that the characteristic of $K$ is zero in Theorem \ref{main} is to obtain the finiteness theorem as follows.

\begin{lemma}\label{3.1}
Let $f : \mathbb{P}^1_K \rightarrow \mathbb{P}^1_K$ be a rational map of degree $d \geq 2$.
Then the cardinality of the set
$$
\mathcal{P} := \{ \alpha \in \mathbb{P}^1_K \mid \exists n \in \mathbb{N}, \exists q \in \mathbb{N}, f^n(\alpha) = \alpha, \lambda(f^n; \alpha)^q = 1  \} $$
is finite.
\end{lemma}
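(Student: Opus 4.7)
The plan is to reduce Lemma \ref{3.1} to a known finiteness theorem for non-repelling periodic cycles of non-Archimedean rational maps. My first step is elementary: if $\alpha \in \mathcal{P}$ with $f^n(\alpha) = \alpha$ and $\lambda(f^n;\alpha)^q = 1$, then by multiplicativity of $|\cdot|$ we have $|\lambda(f^n;\alpha)|^q = |\lambda(f^n;\alpha)^q| = 1$, so $|\lambda(f^n;\alpha)| = 1$. Hence every periodic cycle meeting $\mathcal{P}$ is an indifferent (in particular, non-repelling) periodic cycle of $f$, and $\mathcal{P}$ is contained in the union of all such cycles.

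My second step is to invoke the theorem, due to Benedetto (with parallel results of Rivera-Letelier), asserting that a rational map of degree $d \geq 2$ over an algebraically closed, complete, non-Archimedean field of characteristic zero has only finitely many non-repelling periodic cycles on $\mathbb{P}^1_K$. Since each such cycle is itself a finite subset of $\mathbb{P}^1_K$, their union is finite, and therefore $\mathcal{P}$ is finite.

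The main obstacle is not the reduction step above, which is essentially a single line, but the underlying finiteness theorem for non-repelling cycles. This is exactly where the hypothesis $\mathrm{char}(K) = 0$ enters in an essential way: in positive characteristic one can construct rational maps possessing infinitely many indifferent periodic points, so a genuinely characteristic-zero input is unavoidable. The standard proofs proceed either via analysis on the Berkovich projective line or via perturbative arguments that rely on the separability and differential behaviour of $f$, and both approaches break down without the characteristic-zero assumption. Thus the plan is to cite this result rather than reprove it, treating Lemma \ref{3.1} as an immediate specialization to the indifferent case.
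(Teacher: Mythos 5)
Your first step is correct but discards precisely the information that makes the lemma true: from $\lambda(f^n;\alpha)^q=1$ you pass to $|\lambda(f^n;\alpha)|=1$, replacing the algebraic condition that the multiplier is a root of unity by the much weaker metric condition that the cycle is indifferent. The finiteness theorem you then invoke for this larger set is false in the non-Archimedean setting. Take $K=\mathbb{C}_p$ with $p$ odd and $f(z)=z^2$: every $(2^n-1)$-th root of unity $\zeta$ is a periodic point of period dividing $n$, and $(f^n)'(\zeta)=2^n\zeta^{2^n-1}=2^n$ satisfies $|2^n|_p=1$, so $f$ has infinitely many indifferent cycles even though $\operatorname{char}K=0$. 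More generally, any map with good reduction behaves this way; this is a standard point of divergence from complex dynamics, where the Fatou--Shishikura inequality really does bound the number of non-repelling cycles. The results of Benedetto and Rivera-Letelier that you have in mind control \emph{attracting} cycles (or describe the structure of the domain of quasi-periodicity, where indifferent periodic points are typically dense and infinite in number); they cannot be specialized in the way you propose. Note that none of the points $\zeta$ above lie in $\mathcal{P}$, since $2^{nq}\neq 1$ in characteristic zero --- which shows that it is the root-of-unity hypothesis, not mere indifference, that must be exploited.

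The paper's proof keeps the algebraic condition and transports it to $\mathbb{C}$: since $\operatorname{char}K=0$ and the set of periodic points is countable, the field $\mathbb{Q}(\mathcal{C})$ generated by the coefficients of $f$ together with $\mathcal{P}\setminus\{\infty\}$ embeds into $\mathbb{C}$; the relations $f^n(\alpha)=\alpha$ and $\lambda(f^n;\alpha)^q=1$ are polynomial identities in the coefficients and in $\alpha$, hence are preserved by the embedding, so $\mathcal{P}$ injects into the set of periodic points of the induced complex map $\hat f$ whose multiplier is a root of unity. That set is finite by the complex-dynamical result cited in the paper (Milnor, Corollary 10.16), ultimately because each parabolic cycle attracts a critical orbit. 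To repair your argument you would need either this Lefschetz-principle reduction to $\mathbb{C}$ or a direct non-Archimedean proof of finiteness for root-of-unity multipliers; the indifferent-cycle route cannot work.
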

\begin{proof}[Proof of Lemma \ref{3.1}]
We write the rational map $f$ over $K$ by
$$
{ a_0 + a_1 \cdot z + \cdots + a_d \cdot z^d \over b_0 + b_1 \cdot z + \cdots + b_d \cdot z^d}
$$
and consider the field $\mathbb{Q}(\mathcal{C})$ over $\mathbb{Q}$ where $\mathcal{C} := \{ a_0, a_1, \cdots, a_d, b_0, \cdots, b_d \} \cup (\mathcal{P} \backslash \{ \infty \} )$.
Since the characteristic of $K$ is zero and $\mathcal{C}$ is countable, there exists an algebraic embedding $\iota : \mathbb{Q}(\mathcal{C}) \rightarrow \mathbb{C}$. 
Now we consider the induced rational map $\hat{f} : \mathbb{P}^1_{\mathbb{C}} \rightarrow \mathbb{P}^1_{\mathbb{C}}$ given by
$$
{ \iota(a_0) + \iota(a_1) \cdot z + \cdots + \iota(a_d) \cdot z^d \over \iota(b_0) + \iota(b_1) \cdot z + \cdots + \iota(b_d) \cdot z^d}.
$$
It follows from simple calculations that for any element $\beta \in \mathcal{P} \backslash \{ \infty \}$, the element $\iota(\beta)$ is contained in the set
$$
\hat{\mathcal{P}} := \{ \alpha \in \mathbb{P}^1_\mathbb{C} \mid \exists n \in \mathbb{N}, \exists q \in \mathbb{N}, \hat{f}^n(\alpha) = \alpha, \lambda(\hat{f}^n; \alpha)^q = 1  \}.
$$
Then it follows from \cite[Corollary $10. 16$]{Miln06} that the set $\hat{\mathcal{P}}$ is finite thus the set $\mathcal{P}$ is also finite.
\end{proof}

Now let us consider periodic points.
We say a point $\alpha \in \mathbb{P}^1_K$ is a {\it periodic point} of $f$ with the minimal period $n \in \mathbb{N}$ if $f^n(\alpha) = \alpha$ and $f^l(\alpha) \neq \alpha$ for any $l \in \{1, 2, \cdots, n - 1 \}$.

\begin{lemma}\label{3.2}
Let $f : \mathbb{P}^1_K \rightarrow \mathbb{P}^1_K$ be a rational map of degree $\geq 2$ and $\alpha$ be a periodic point of $f$ with the minimal period $n$.
Suppose that $\{ f^{k}(\alpha) \}_{k = 0}^{n - 1}$ is contained in $K$.
Then the multiplicity of $\alpha$ of $f^n$ is equal to the multiplicity of $f^k(\alpha)$ of $f^n$ for any $k \in \{ 0, 1, \cdots, n-1 \}$.
\end{lemma}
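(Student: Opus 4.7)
My plan is to reinterpret the multiplicity of a fixed point $\gamma \in K$ of $f^n$ as the order $\operatorname{ord}_\gamma(f^n(z)-z)$ in the local ring at $\gamma$ (the hypothesis that the whole orbit lies in $K$ means Proposition \ref{2.3}(1) applies uniformly at every orbit point). Fix $k \in \{0, 1, \ldots, n-1\}$ and set $\beta := f^k(\alpha)$; the goal is
$$\operatorname{ord}_\alpha(f^n(z)-z) = \operatorname{ord}_\beta(f^n(z)-z).$$
The argument splits into two cases via the chain rule, with the nontrivial case handled by the commutation $f^k \circ f^n = f^n \circ f^k$.

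By the chain rule, $\lambda(f^n;\alpha)$ and $\lambda(f^n;\beta)$ both equal the cyclic product $\prod_{j=0}^{n-1} \lambda(f; f^j(\alpha))$ around the orbit, hence coincide. If this common value is $\neq 1$, Proposition \ref{2.3}(1) gives multiplicity $1$ at both $\alpha$ and $\beta$, and we are done.

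Suppose instead $\lambda(f^n;\alpha) = \lambda(f^n;\beta) = 1$. Setting $A := f^k$, the factorization $\lambda(f^n;\alpha) = (f^{n-k})'(\beta) \cdot A'(\alpha) = 1$ forces $A'(\alpha) \neq 0$. The key identity is $A \circ f^n = f^{n+k} = f^n \circ A$. Using the power series expansion of $A$ around $\alpha$, I would carry out the standard algebraic factorization
$$A(u) - A(v) = (u - v) \cdot Q(u,v)$$
as a power series near $(\alpha, \alpha)$, with $Q(\alpha,\alpha) = A'(\alpha) \neq 0$. Substituting $u = f^n(z)$ and $v = z$ yields
$$A(f^n(z)) - A(z) = (f^n(z) - z) \cdot Q(f^n(z), z),$$
whose $\operatorname{ord}_\alpha$ equals $\operatorname{ord}_\alpha(f^n(z)-z)$, since $Q(f^n(z), z)$ is a unit in the local ring at $\alpha$. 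By the commutation, the same expression equals $f^n(A(z)) - A(z)$; and since $A'(\alpha) \neq 0$, the substitution $w = A(z)$ is a local isomorphism sending $\alpha$ to $\beta$, so $\operatorname{ord}_\alpha\bigl(f^n(A(z)) - A(z)\bigr) = \operatorname{ord}_\beta(f^n(w) - w)$. Combining the two computations yields the equality of multiplicities.

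The main obstacle is the bookkeeping in the hard case: the two-variable factorization of $A(u) - A(v)$ and the local change of variable $w = A(z)$ must both preserve orders of vanishing, which they do precisely because $A'(\alpha) \neq 0$. The assumption that the orbit stays in $K$ is what allows Proposition \ref{2.3}(1) to be invoked uniformly across the orbit, avoiding any need to mix in the $\infty$-chart version.
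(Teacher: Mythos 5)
Your proof is correct, but it takes a genuinely different route from the paper's. The paper argues by direct computation: it expands $f$ as a power series around each point of the cycle, composes these expansions to get a local expansion of $f^n$ at each orbit point, and observes that the multiplier and the first nonlinear correction are built from cyclic products over the whole orbit, hence independent of the starting point. You replace that composite-expansion bookkeeping with the structural identity $f^k \circ f^n = f^n \circ f^k$: writing $A = f^k$, the factorization $A(u)-A(v)=(u-v)\,Q(u,v)$ with $Q(\alpha,\alpha)=A'(\alpha)\neq 0$ gives $\operatorname{ord}_\alpha(f^n(z)-z)=\operatorname{ord}_\alpha\bigl(f^n(A(z))-A(z)\bigr)$, and the local invertibility of $A$ at $\alpha$ converts the latter into $\operatorname{ord}_\beta(f^n(w)-w)$. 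What your approach buys is robustness and transparency: the paper's displayed formula for $f^n$ is delicate (its correction terms $\Lambda(\mu_l)$ are centered at different orbit points and the index ranges are not written consistently), whereas your two steps --- multiplying by a unit in the local ring does not change the order, and a local isomorphism transports orders of vanishing --- are each clean one-line facts. Your case split on $\lambda(f^n;\alpha)\neq 1$ versus $=1$ is genuinely needed, since $A'(\alpha)$ could vanish when the multiplier is $0$, and you dispose of that case correctly via Proposition \ref{2.3}(1) and the cyclic invariance of the multiplier. The only ingredient you should make explicit is the non-Archimedean inverse function theorem (a convergent power series with nonvanishing linear term is locally invertible), which justifies the substitution $w=A(z)$; this is standard and available in the setting of the paper.
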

\begin{proof}[Proof of Lemma \ref{3.2}]
It follows from Proposition \ref{2.3}(1) that $f$ can be written by
\begin{align*}
f(\alpha_k) + \lambda_k \cdot (z - \alpha_k) + a_{\mu_k} \cdot {(z - \alpha_k)^{\mu_k}} + \cdots
\end{align*}
near each $\alpha_k$ thus $f^n$ can be written by 
$$
\alpha_k +  \left( \prod_{l =1}^{n} \lambda_l  \right) \cdot (z - \alpha_k) + \sum_{l = 0}^{n - 1} \Lambda(\mu_l) + \cdots
$$
near each $\alpha_k$ where
$$
\Lambda(\mu_l) = {\lambda_1}^{\mu_l} \cdot {\lambda_2}^{\mu_l} \cdot \cdots \cdot  {\lambda_{l-1}^{\mu_l}} \cdot a_{\mu_l} \cdot {\lambda_{l + 1}} \cdot \cdots \cdot \lambda_n \cdot (z - \alpha_l)^{\mu_l}.
$$
Since $\prod_{l =1}^{n} \lambda_l $ and $\sum_{l = 0}^{n - 1} \Lambda(\mu_l)$ are independent of $k$, we complete our proof.
\end{proof}
\begin{lemma}\label{3.3}
Let $f : \mathbb{P}^1_K \rightarrow \mathbb{P}^1_K$ be a rational map of degree $\geq 2$ and $\alpha \in \mathbb{P}^1_K$ be a fixed point of $f$.
Suppose that there exists a natural number $q$ such that $\lambda(f; \alpha)^q = 1$ and $\lambda(f; \alpha)^l \neq 1$ for any $l$ in $\{ 1, 2, \cdots, q-1 \}$. 
Then $\mu(f^q; \alpha) - 1$ can be divided by $q$.
Moreover, for any natural number $k$, we have $\mu(f^q; \alpha) = \mu(f^{k \cdot q}; \alpha)$.
\end{lemma}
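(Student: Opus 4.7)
The plan is to work in a local coordinate near $\alpha$, expand $f$ as a formal power series, and exploit the identity $F \circ F^q = F^q \circ F$ to pin down where the lowest nonlinear term of $F^q$ can sit. First I would use Proposition \ref{2.1} to reduce to the case $\alpha \in K$ by a coordinate change (which preserves $\lambda$, the multiplicity, and therefore $\mu$), and then apply Proposition \ref{2.3}(1) to write
$$F(w) := f(w + \alpha) - \alpha = \lambda w + a_2 w^2 + a_3 w^3 + \cdots,$$
where $\lambda = \lambda(f; \alpha)$. The chain rule gives $\lambda(f^q; \alpha) = \lambda^q = 1$, so $\mu := \mu(f^q; \alpha)$ is well defined and
$$F^q(w) = w + b\, w^{\mu} + O(w^{\mu+1}), \qquad b \neq 0.$$
Note that $F^q \not\equiv w$ since $\deg(f^q) = d^q \geq 2$ rules out $f^q$ being the identity on $\mathbb{P}^1_K$.

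For part (1), the key idea is that $F \circ F^q = F^q \circ F$. On the one side,
$$F(F^q(w)) = \lambda F^q(w) + \sum_{k \geq 2} a_k \bigl(F^q(w)\bigr)^k,$$
and since $F^q(w) = w + O(w^\mu)$, one has $(F^q(w))^k = w^k + O(w^{k+\mu-1})$; for $2 \leq k \leq \mu-1$ the correction lies strictly above degree $\mu$, so the coefficient of $w^\mu$ on this side is $\lambda b + a_\mu$. On the other side,
$$F^q(F(w)) = F(w) + b \bigl(F(w)\bigr)^\mu + O(w^{\mu+1}),$$
whose coefficient of $w^\mu$ is $a_\mu + b \lambda^\mu$. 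Equating gives $\lambda b = b \lambda^\mu$; since $b \lambda \neq 0$, we get $\lambda^{\mu-1} = 1$, and as $q$ is by hypothesis the order of $\lambda$, this forces $q \mid \mu - 1$.

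For part (2), set $G := F^q$ and prove by induction on $k$ that
$$G^k(w) = w + k b\, w^\mu + O(w^{\mu+1}).$$
The inductive step is immediate: $G^k(w) = w + O(w^2)$ implies $(G^k(w))^\mu = w^\mu + O(w^{\mu+1})$, so
$$G^{k+1}(w) = G(G^k(w)) = G^k(w) + b \bigl(G^k(w)\bigr)^\mu + O(w^{\mu+1}) = w + (k+1) b\, w^\mu + O(w^{\mu+1}).$$
Since $\mathrm{char}\, K = 0$, one has $k b \neq 0$ for every $k \in \mathbb{N}$, so $\mu(f^{kq}; \alpha) = \mu = \mu(f^q; \alpha)$.

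The only real obstacle is the coefficient bookkeeping in part (1): one must verify that no $a_k$ with $k < \mu$ contaminates the $w^\mu$-coefficient of $F(F^q(w))$. This works out cleanly precisely because $F^q - \mathrm{id}$ vanishes to order $\mu$, so every cross term $a_k (F^q(w))^k$ with $2 \leq k \leq \mu - 1$ perturbs only degrees $\geq k + \mu - 1 \geq \mu + 1$. Once this is noted, the commutativity trick in (1) and the direct induction in (2) go through with no further difficulty.
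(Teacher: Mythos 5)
Your proof is correct and takes essentially the same approach as the paper: both arguments use the commutation relation $f \circ f^q = f^q \circ f$ to compare the coefficient of the $\mu(f^q;\alpha)$-th power term on each side, deduce $\lambda^{\mu-1}=1$ and hence $q \mid \mu - 1$, and then iterate $f^q$ to see that the leading nonlinear coefficient gets multiplied by $k$, which is nonzero in characteristic zero. Your write-up is if anything slightly more explicit than the paper's about the cross-term bookkeeping and about why $\mu(f^q;\alpha)$ is well defined.
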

\begin{proof}[Proof of Lemma \ref{3.3}]
Without loss of generality, we may assume that $\alpha = f(\alpha) = 0$. 
Then $f$ can be written by 
$$
\lambda \cdot z + a_{\mu} \cdot z^{\mu} + \cdots
$$
near $\alpha$ where $\lambda : = \lambda(f; \alpha)$ and $\mu := \mu(f; \alpha)$ and $f^q$ can be written by
$$
\lambda^q \cdot z + b_{\tilde{\mu}} \cdot z^{\tilde{\mu}} + \cdots
$$
near $\alpha$ where $\tilde{\mu} := \mu(f^q; \alpha)$.
We first calculate that
\begin{align*}
f^{q} \circ f (z) 
&= \lambda^q \cdot f(z) + b_{\tilde{\mu}} \cdot f(z)^{\tilde{\mu}} + \cdots \\
&= \lambda^q \cdot (\lambda \cdot z + a_\mu \cdot z^\mu + \cdots + a_{\tilde{\mu}}\cdot z^{\tilde{\mu}} + \cdots) \\
&\quad + b_{\tilde{\mu}} \cdot (\lambda \cdot z + a_\mu \cdot z^{\mu } + \cdots + a_{\tilde{\mu}} \cdot z^{\tilde{\mu}} + \cdots)^{\tilde{\mu}} + \cdots \\
&= \lambda^{q+1} \cdot z + a_\mu \cdot \lambda^q \cdot z^{\mu} + \cdots + ( a_{\tilde{\mu}} \cdot \lambda^q + b_{\tilde{\mu}} \cdot \lambda^{\tilde{\mu}}) \cdot z^{\tilde{\mu}} + \cdots.
\end{align*}
Next we compute that
\begin{align*}
f \circ f^{q} (z) 
&= \lambda \cdot f^{q}(z) + a_\mu \cdot f^q(z)^\mu + \cdots + a_{\tilde{\mu}} \cdot f^{q}(z)^{\tilde{\mu}} + \cdots \\
&= \lambda \cdot (\lambda^q \cdot z + b_{\tilde{\mu}} \cdot z^{\tilde{\mu}} + \cdots ) + a_{\mu} \cdot (\lambda^q \cdot z + b_{\tilde{\mu}} \cdot z^{\tilde{\mu}} + \cdots )^{\mu} \\
&\quad + \cdots +  a_{\tilde{\mu}} \cdot (\lambda^q \cdot z + b_{\tilde{\mu}} \cdot z^{\tilde{\mu}} + \cdots )^{\tilde{\mu}} + \cdots\\
&= \lambda^{q + 1} \cdot z + a_{\mu} \cdot \lambda^{q \mu} \cdot z^{\mu} + \cdots + (b_{\tilde{\mu}} \cdot \lambda +  a_{\tilde{\mu}} \cdot \lambda^{q \tilde{\mu}} ) \cdot z^{\tilde{\mu}} + \cdots.
\end{align*}
It follows from a trivial functional equation $f^{q} \circ f = f^{q + 1} = f \circ f^{q}$ that
$$
a_{\tilde{\mu}} + b_{\tilde{\mu}} \cdot \lambda^{\tilde{\mu}} = b_{\tilde{\mu}} \cdot \lambda + a_{\tilde{\mu}} \cdot \lambda^{q \tilde{\mu}} = b_{\tilde{\mu}} \cdot \lambda + a_{\tilde{\mu}}.
$$
This implies that $\lambda^{\tilde{\mu} - 1} = 1$ thus $\tilde{\mu} - 1$ can be divided by $q$.% because $q$ is the minimal natural number satisfying $\lambda^q = 1$.

To prove the second statement, let us see the following equations.
\begin{align*}
f^{k q}(z) = \underbrace{f^q \circ f^q \circ \cdots \circ f^q}_{k \text{ times} }(z)
&= z + (\underbrace{b_{\tilde{\mu}} + \cdots + b_{\tilde{\mu}}}_{k \text{ times}})  \cdot z^{\tilde{\mu}} + \cdots.
%= z + k \cdot b_{\tilde{\mu}} \cdot z^{\tilde{\mu}} + \cdots .
\end{align*}
It follows from Proposition \ref{2.3} that $\mu(f^{k q}; \alpha ) = \tilde{\mu} = \mu(f^q; \alpha)$ for any $k \in \mathbb{N}$.
\end{proof}
%

%%%%%%%%%%%%%%%%%%%%%%%%%%%%%%%%%%%%%%%
\section{Proof of Theorem \ref{main}}
%%%%%%%%%%%%%%%%%%%%%%%%%%%%%%%%%%%%%%%

We end this paper with the proof of the main theorem.

\begin{proof}[Proof of Theorem \ref{main}]
Without loss of generality, we may assume that $\infty$ is a fixed point of $f$.
By Corollary \ref{2.2}, the number of fixed points of $f^n$ is $d^n + 1$, counted with multiplicity. Let us count the multiplicity of a fixed point $\alpha$ of $f^n$.

If $\alpha \notin \mathcal{P}$ where $\mathcal{P}$ is the set defined in Lemma \ref{3.1}, then $\lambda(f^{k n}; \alpha) = \lambda(f^{n}; \alpha)^k \neq 1$ for any $k \in \mathbb{N}$ thus it follows from Proposition \ref{2.3} that the multiplicity of $\alpha$ of $f^{k n}$ is equal to $1$.
On the other hand, by Lemma \ref{3.1}, the cardinality of $\mathcal{P}$ is finite thus we can decompose $\mathcal{P}$ into $N$ {\it finite invariant sets} $\{ C_i \}_{i = 1}^{N}$, that is, each $C_i$ is finite, $f(C_i) = C_i$, and
$$
\mathcal{P} = C_1 \sqcup C_2 \sqcup \cdots \sqcup C_N.
$$
Let us write each set as $C_i = \{ f^{l}(\alpha_i) \}_{l = 0}^{n_i - 1}$ where $n_i$ is the minimal period of the periodic point $\alpha_i$.
Let $q_i$ be the minimal natural number satisfying $\lambda(f^{n_i}; \alpha_i)^{q_i} = 1$ and $r_i$ the natural number satisfying $q_i \cdot r_i = \mu(f^{n_i q_i}; \alpha_i) - 1$.
For each $i \in \{1, 2, \cdots, N \}$, the multiplicity of $\alpha_i$ of $f^{m n_i}$ is equal to $1$ (resp. $\mu(f^{m n_i}; \alpha_i) = \mu(f^{k n_i q_i}; \alpha_i) = \mu(f^{n_i q_i }; \alpha_i) = 1 + q_i \cdot r_i$) when $m \neq k \cdot q_i$ for any $k \in \mathbb{N}$ (resp. $m = k \cdot q_i$ for some $k \in \mathbb{N}$) by Proposition \ref{2.3} and Lemma \ref{3.3}.
Furthermore, if $n_i \geq 2$, then $C_i \subset K$ because $\infty$ is a fixed point of $f$. 
Thus it follows from Lemma \ref{3.2} that the multiplicity of $\alpha_i$ of $f^{n_i}$ is equal to the multiplicity of $f^l(\alpha_i)$ of $f^{n_i}$ for any $l \in \{0, 1, \cdots, n_i -1 \}$.

Finally, we obtain the closed form of $\zeta_f$ by subtracting the multiplicity from the total number of periodic points as follows.
\begin{align*}
\sum_{j=1}^{\infty}{\mathcal{N}_j \over j}T^j 
&= \sum_{j=1}^{\infty}{d^j + 1 \over j}T^j - \sum_{i = 1}^{N} \left( \sum_{k_i = 1}^{\infty} { n_i \cdot q_i \cdot r_i \over n_i \cdot q_i \cdot k_i} T^{n_i q_i k_i} \right)  \\
&=  \sum_{j=1}^{\infty}{d^j \over j}T^j + \sum_{j =1}^{\infty}{1 \over j}T^j 
- \sum_{i = 1}^{N} r_i \left( \sum_{k_i =1}^{\infty} {  1 \over k_i} (T^{n_i q_i})^{k_i} \right) \\
&= \log(1 - dT)^{-1} + \log(1 - T)^{-1} + \sum_{i = 1}^{N} r_i \cdot \log(1 - T^{n_i q_i}) \\
&= \log \{ (1 - dT)^{-1} \cdot (1 - T)^{-1} \cdot \prod_{i = 1}^{N}(1 - T^{n_i q_i})^{r_i} \}.
\end{align*}
This implies that
$$
\zeta_{f}(T) = \exp \left( \sum_{j=1}^{\infty}{\mathcal{N}_j \over j}T^j \right) = (1 -dT)^{-1} \cdot (1 - T)^{-1} \cdot \prod_{i = 1}^{N} (1 - T^{n_i q_i})^{r_i}.
$$
In particular, this implies that $\zeta_f(T)$ is rational over $\mathbb{Q}$ and all of the zeros of $\zeta_f(T)$ are on the unit disk.
\end{proof}

%    Bibliographies can be prepared with BibTeX using amsplain,
%    amsalpha, or (for "historical" overviews) natbib style.

\bibliographystyle{amsalpha}

%    Insert the bibliography data here.

\end{document}